\renewcommand{\thetheoremName}
\def\aug{\mathfrak{a}}
\def\Ad{\mathrm{ad}}
\def\RS{R_{\mathcal{S}}}
\def\Rloc{R^{\mathrm{loc}}}
\def\alphajj{\alpha_{j\kern-0.04em{j}}}
\def\CO{\mathcal{O}}
\def\m{\mathfrak{m}}
\def\Z{\mathbf{Z}}
\def\OL{\mathcal{O}}
\def\Q{\mathbf{Q}}
\def\F{\mathbf{F}}
\def\PGL{\mathrm{PGL}}
\def\Hom{\mathrm{Hom}}
\def\GL{\mathrm{GL}}
\def\rank{\mathrm{rank}}
\def\Qbar{\overline{\Q}}
\def\rhobar{\overline{\rho}}
\def\SL{\mathrm{SL}}
\def\eps{\epsilon}
\def\Frob{\mathrm{Frob}}
\def\Spec{\mathrm{Spec}}
\def\tij{t_{i\kern-0.03em{j}}}
\def\CO{\mathcal{O}}
\newtheorem{theorem}{Theorem} 
\newtheorem{lemma}[theorem]{Lemma}
\newtheorem{prop}[theorem]{Proposition}
\newtheorem{corr}[theorem]{Corollary}
\newtheorem{remark}[theorem]{Remark}
\def\Ru{R^{\mathrm{unr}}}
\def\Rsplit{R^{\mathrm{split}}}
\def\Runiv{R^{\mathrm{univ}}}
\def\Run{R^{\mathrm{un}}}
\def\Gal{\mathrm{Gal}}
\def\rbar{\overline{r}}
\newcommand{\UZ}{U \kern-.1em{Z}}
\newcommand{\VF}{V \kern-.07em{F}}
\def\rhounr{\rho^{\mathrm{unr}}}
\begin{document}

\title{Finiteness of unramified deformation rings}
\author{Patrick B. Allen and Frank Calegari}
\thanks{The first author was supported in part by a Simons Research Travel Grant.
The second author was supported in part by NSF Career Grant
  DMS-0846285.}
\subjclass[2010]{11F80, 11F70.}

\begin{abstract}
We prove that the universal unramified deformation ring $\Ru$ of a continuous Galois representation $\rhobar: G_{F^{+}} \rightarrow \GL_n(k)$ (for a totally real field~$F^{+}$ and finite field~$k$) 
is finite over $\OL = W(k)$ in many cases. We also
prove (under similar hypotheses) that the universal deformation ring $\Runiv$ is finite over the local deformation ring $\Rloc$.
\end{abstract}

\maketitle

\section*{Introduction}

Let $k$ be a finite field of characteristic $p$, and let $\OL = W(k)$. 
Let $F$ be a number field, 
and consider a  continuous absolutely irreducible Galois representation
$$\rhobar: G_{F} \rightarrow \GL_n(k),$$
where $G_F = \Gal(\overline{F}/F)$ for some fixed algebraic closure $\overline{F}$ of $F$.
If $(A,\m)$ is a complete local  $\OL$-algebra with residue field $k$, then a
deformation
$\rho$ of $\rhobar$  to $A$ unramified outside a finite set of primes $S$ consists an equivalence class of homomorphisms
$$ \rho: G_{F} \rightarrow \GL_n(A)$$
such that the composite of $\rho$ with the projection
$\GL_n(A) \rightarrow \GL_n(A/\m) = \GL_n(k)$ is $\rhobar$, and such that the extension of fields
$F(\ker(\rho))$ over $F(\ker(\rhobar))$ is unramified away from places above primes in $S$ (see~\cite{MazurDeform}). 
The nature of such deformations is quite different depending on whether $S$ contains the primes above $p$ or not.
If $S$ contains all the primes above $p$, we denote the  universal deformation ring by $\Runiv$; if $S$
contains no primes above $p$, we denote the corresponding universal deformation ring by $\Ru$.
According to the  Fontaine--Mazur conjecture (see~\cite{FM}, Conj.~5a), any
map $\Ru \rightarrow \Qbar_p$ gives rise to a deformation  $\rho$ of $\rhobar$ with finite image. (This form
of the conjecture is known as the unramified Fontaine--Mazur conjecture.) 
Boston's strengthening of this conjecture (\cite{BostonII}, Conjecture~2 and the subsequent corollary) is the claim that
the \emph{universal} unramified deformation:
$$\rhounr: G_{F} \rightarrow \GL_n(\Ru)$$
has finite image.
In contrast, the ring $\Runiv$  is typically of large dimension (see \S~1.10 of \cite{MazurDeforming}). 
A conjecture of Mazur 
predicts that the relative dimension of $\Runiv$ over $\OL$ is
(in odd characteristic)
$$(1 + r_2) + (n^2 - 1)[F:\Q] - \sum_{v|\infty} 
\dim H^0(D_{v},\Ad^0(\rhobar)),$$
where
$\Ad^0(\rhobar)$ denotes (in any choice of basis) the trace zero matrices in $\Hom(\rhobar,\rhobar)$. 
A choice of basis for the universal deformation makes $\Runiv$
an algebra over a local deformation ring 
$$\Rloc = \displaystyle{\widehat{\bigotimes}_{v|p} \Rloc_v}$$
where $\Rloc_v$ is the universal framed local deformation ring of $\rhobar|{D_v}$ for $v|p$. The $\Rloc$-algebra structure may depend on the choice of basis, but it is canonical up to automorphisms of $\Rloc$.
  It is not true in general that
$\Spec(\Runiv) \rightarrow \Spec(\Rloc)$ is a closed immersion, even in the minimal case where $S$ is  only divisible by the  primes dividing~$p$. A simple example to consider
is the deformation ring of any one dimensional representation $\rhobar: G_{F} \rightarrow k^{\times}$;
the corresponding map $\Spec(\Runiv) \rightarrow \Spec(\Rloc)$ is a closed immersion
if and only if the maximal everywhere unramified abelian~$p$-extension of~$F$ in which~$p$
splits completely is trivial.
  It is, however,  reasonable to  conjecture that this map is always 
a finite morphism.  Indeed, one  heuristic justification for the  Fontaine--Mazur conjecture is to imagine that the
generic fibres of the image of $\Spec(\Runiv)$ and the locus of local crystalline representations of a fixed weight are transverse, and to
infer (from a conjectural computation of dimensions) that the intersection is finite, and hence that there are only
finitely many global crystalline representations of a fixed weight (see pg.~191--192 of \cite{FM}); this line of thinking at least presumes that the global to local map is quasi--finite.

We prove the following:

\begin{theorem} \label{theorem:one} Let $F^{+}$ be a totally real field, and let $\rhobar: G_{F^{+}} \rightarrow \GL_n(k)$ be a continuous absolutely irreducible
representation. Suppose that:
\begin{enumerate}
\item $p >  2$.
\item $\Ad^0(\rhobar|G_{F^{+}(\zeta_p)})$ is absolutely irreducible and $p > 2n^2 - 1$, or, if $n = 2$ and $\rhobar$ is totally odd,
$\rhobar|G_{F^{+}(\zeta_p)}$ has adequate image. 
\end{enumerate}
Then  $\Ru$ is a finite $\OL$-algebra, and  $\Runiv$ is a finite $\Rloc$-algebra.
\end{theorem}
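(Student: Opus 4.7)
\emph{Proof plan.} I plan to prove the stronger second assertion first and deduce the first from it. If $\Runiv$ is finite over $\Rloc$, then the unramified quotient $\Ru$ is finite over the corresponding local ``fixed inertia'' ring $\widehat{\bigotimes}_{v\mid p}R^{\mathrm{fi}}_v$, which is itself finite over $\OL$: a deformation at $v \mid p$ with the prescribed inertia condition is determined by a lift of a fixed semisimple conjugacy class under Frobenius, which varies in a finite scheme over $\OL$. So it suffices to show that $\Runiv$ is finite over $\Rloc$.

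My strategy is a Taylor--Wiles--Kisin patching argument. Because $\rhobar$ is a representation of $G_{F^+}$ with no automorphy assumption (and is not assumed to be odd in the general case), there is no direct modularity input. I would therefore pass to an auxiliary soluble CM extension $F/F^+$ chosen so that $\rhobar|_{G_F}$ remains absolutely irreducible with the same large-image properties, and so that after a suitable twist $\rhobar|_{G_F}$ becomes conjugate self-dual and potentially automorphic by the theorem of Barnet-Lamb--Gee--Geraghty--Taylor. Patching a space of automorphic forms on a definite unitary group against the patched deformation ring $R_\infty = R^{F,\mathrm{loc}}[[x_1,\ldots,x_g]]$ (with $g$ the size of a Taylor--Wiles Selmer group), one obtains a finitely generated patched module $M_\infty$ over $R_\infty$. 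The hypotheses of the theorem ($p > 2n^2-1$ with $\Ad^0(\rhobar|_{G_{F^+(\zeta_p)}})$ absolutely irreducible, or adequate image when $n=2$) are exactly what is needed to produce enough Taylor--Wiles primes and to guarantee that $M_\infty$ has full support over $R_\infty$; this yields finiteness of the universal deformation ring $R^F$ of $\rhobar|_{G_F}$ over its local counterpart $R^{F,\mathrm{loc}}$.

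The hardest step is then to descend this finiteness from $F$ back to $F^+$: the natural restriction map $R^F \to \Runiv$ is in general neither injective nor surjective, so finiteness does not transfer automatically. The discrepancy is controlled by the cohomology of $\Gal(F/F^+)$ acting on $\ad\rhobar$, via an inflation--restriction argument applied to the tangent spaces, combined with a parallel comparison between $R^{F,\mathrm{loc}}$ and $\Rloc$ over the primes of $F$ above those of $F^+$. Under the hypotheses on $p$ and the image of $\rhobar$, the relevant group-cohomology terms vanish or are bounded, so the two global deformation rings differ by a finite extension, and the finiteness of $R^F$ over $R^{F,\mathrm{loc}}$ transfers to $\Runiv$ over $\Rloc$. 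The delicate point will be to arrange the CM extension $F$ so that both the potential automorphy step and the descent simultaneously succeed; here the flexibility of the Khare--Wintenberger--Moret-Bailly style construction of $F$ should suffice.
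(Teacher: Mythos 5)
There are genuine gaps here, and the most serious one is in your very first step. Your reduction runs in the wrong direction and the key claim in it is false: the local unramified (framed) deformation ring at a place $v\mid p$ is a power series ring over $\OL$ in $n^2$ variables (an unramified lift is just an arbitrary lift of Frobenius; this is Lemma~\ref{lemma:unram} in the paper), not a finite $\OL$-algebra, so finiteness of $\Runiv$ over $\Rloc$ gives you $\Ru$ finite over a huge power series ring and nothing more. The actual implication goes the other way: once $\Ru$ is known to be finite over $\OL$, the fibre $\Runiv\otimes_{\Rloc}k$ carries a representation that is unramified at $p$ (its restriction to each $D_v$ is $\rhobar|D_v$), hence is a quotient of $\Ru\otimes_{\OL}k$ and therefore Artinian; Nakayama over the complete local ring $\Rloc$ then gives finiteness of $\Runiv$ over $\Rloc$. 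So you must prove the $\Ru$ statement first.

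The second gap is the claim that one can choose a CM extension $F/F^{+}$ so that $\rhobar|_{G_F}$ becomes conjugate self-dual after a twist. Conjugate self-duality is a genuine constraint on the representation, not something achievable by base change or twisting, so for general $n$ there is no unitary-group patching setup for $\rhobar$ itself. The paper's essential device is to replace $\rhobar$ by $\Ad^0(\rhobar)$, which is automatically self-dual via the trace pairing, has the correct sign, and is adequate under hypothesis (2) by Theorem~A.9 of~\cite{thorne}; one extends $\Ad^0(\rhobar)|_{G_F}$ to a $\mathcal{G}_{n^2-1}(k)$-valued representation of $G_{F^{+}}$ and applies the CM-field finiteness theorem to that. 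Finiteness for $\rhobar$ is then recovered not by an inflation--restriction descent but by the elementary observation that deformations of $\rhobar$ with fixed adjoint have kernel contained in the maximal abelian pro-$p$ extension of $F(\ker\rhobar)$ unramified outside $S$, which is finite by class field theory. Finally, even in the conjugate self-dual setting, patching with a fixed crystalline condition controls a deformation ring with a local condition at $p$, not $\Runiv$ over the full $\Rloc$ and certainly not the unramified ring $\Ru$; the paper instead invokes Thorne's finiteness of the ordinary deformation ring over the Iwasawa algebra $\Lambda$ and supplies the one missing local ingredient, namely that the unramified local deformation ring is a quotient of Geraghty's ordinary ring modulo the augmentation ideal (Proposition~\ref{prop:unramord}). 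Your proposal has no analogue of this step, so the patching output never connects to the unramified deformation problem.
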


The second condition holds, for example, when $\rhobar$ has image containing $\SL_n(k)$ and~$p$ is greater than
  $2n^2 - 1$. 
The finiteness of $\Runiv$ over $\Rloc$ can be deduced from appropriate ``$R = \mathbf{T}$" theorem, since one proves that the maximal reduced quotient of $\Runiv$ modulo an ideal of $\Rloc$ is isomorphic to a finite $\CO$-algebra $\mathbf{T}$.
However in dimension $>2$ without a conjugate self dual assumption, the current ``$R = \mathbf{T}$" theorems are contingent on conjectural properties of the cohomology of arithmetic quotients (see Part~2 of \cite{CG}).

We shall  deduce from Theorem~\ref{theorem:one}  the following corollaries:

 \begin{corr} \label{corr:boston} For any $\rhobar$ satisfying the conditions of Theorem~\ref{theorem:one}, Boston's
 strengthening of the unramified Fontaine--Mazur conjecture is equivalent to the unramified Fontaine--Mazur conjecture.
 \end{corr}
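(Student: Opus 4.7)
The direction Boston $\Rightarrow$ unramified Fontaine--Mazur is immediate, since any continuous specialization $\Ru \to \Qbar_p$ carries a finite image to a finite image. For the converse I would assume the unramified Fontaine--Mazur conjecture for $\rhobar$ and invoke Theorem~\ref{theorem:one} to know that $\Ru$ is a finite $\OL$-algebra. The plan is to show $\rhounr$ has finite image in two stages: first modulo the nilradical $N := \mathrm{nil}(\Ru)$, then by climbing up the nilpotent filtration using class field theory.

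For the first stage, $\Ru$ (being Noetherian) has finitely many minimal primes $\mathfrak{p}$, and $\Ru/N$ injects into $\prod_\mathfrak{p} \Ru/\mathfrak{p}$. Each factor is a complete local domain finite over $\OL$, so is either a finite field extension of $k$ (when $p \in \mathfrak{p}$) or the ring of integers in a finite extension of $W(k)[1/p]$ (otherwise). The first case automatically gives finite image, and in the second the composite $\Ru \twoheadrightarrow \Ru/\mathfrak{p} \hookrightarrow \Qbar_p$ is a $\Qbar_p$-valued point of $\Ru$, so the induced representation has finite image by the unramified Fontaine--Mazur hypothesis. Combining these shows $\rhounr \bmod N$ has finite image.

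For the second stage, fix $t$ minimal with $N^t = 0$ and set $\rho_k := \rhounr \bmod N^k$, and induct on $k$ with $k = 1$ handled above. Suppose $\rho_k$ has finite image, cutting out a finite Galois extension $K_k/F$. The kernel of $\GL_n(\Ru/N^{k+1}) \twoheadrightarrow \GL_n(\Ru/N^k)$ is, via $1 + X \mapsto X$, the additive group $M_n(N^k/N^{k+1})$ (using $N^{2k} \subseteq N^{k+1}$ for $k \geq 1$); since $N$ annihilates $N^k/N^{k+1}$, conjugation of the quotient on this kernel factors through $\GL_n(\Ru^{\mathrm{red}})$ and is therefore trivial on $G_{K_k}$. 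Thus $\rho_{k+1}|_{G_{K_k}}$ is a continuous group homomorphism into the pro-$p$ abelian group $M_n(N^k/N^{k+1})$, unramified outside the primes of $K_k$ lying above the set $S$ fixed in the definition of $\Ru$ (which excludes all primes above $p$). By class field theory, the maximal pro-$p$ abelian extension of $K_k$ unramified outside such a set has finite Galois group: the only possibly infinite pro-$p$ contribution to the corresponding ray class group would come from local units at primes above $p$, which are absent. Therefore $\rho_{k+1}|_{G_{K_k}}$, and hence $\rho_{k+1}$, has finite image, and iterating $t$ times yields $\rhounr$ with finite image.

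I expect the first stage---structurally describing $\Ru$ via its minimal primes and matching each $\Qbar_p$-point to one covered by the Fontaine--Mazur hypothesis---to be the main conceptual step. The inductive ascent is essentially mechanical once one verifies that the conjugation action trivializes on $G_{K_k}$, leaving only a routine invocation of the finiteness of a pro-$p$ ray class group at tame level.
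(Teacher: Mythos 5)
Your proof is correct, and it takes a genuinely different route from the paper's. The paper proves a standalone equivalence (Proposition~\ref{prop:equiv}): $\rhounr$ has finite image if and only if $\Ru$ is finite over $\OL$ and every minimal prime of $\Ru[1/p]$ yields a finite-image representation; the corollary is then Theorem~\ref{theorem:one} plus this proposition. For the substantive direction the paper inverts $p$ (noting the kernel of $\Ru\to\Ru[1/p]$ is finite), decomposes $\Ru[1/p]$ into Artinian local $E$-algebras $A$ with residue field $H$, and rules out $A\neq H$ by producing a dual-numbers deformation over $H[\eps]/\eps^2$ whose extension class would cut out a nontrivial abelian pro-$p$ extension unramified outside the $p$-free set $S$ --- impossible by class field theory. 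You instead never invert $p$: you handle $\Ru/N$ via its minimal primes (the characteristic-$p$ ones are finite rings, the characteristic-$0$ ones embed in $\Qbar_p$ and are covered by the Fontaine--Mazur hypothesis), and then climb the filtration $N^k/N^{k+1}$, observing that the conjugation action trivializes on $G_{K_k}$ so each step is a continuous homomorphism into a pro-$p$ abelian group, killed by the finiteness of the maximal abelian pro-$p$ extension of $K_k$ unramified outside a set prime to $p$. The class-field-theory input is the same in both arguments; your dévissage treats all nilpotents (including $p$-torsion ones) uniformly, whereas the paper's dual-number trick works only on the generic fibre and must separately dispose of the finite kernel of $\Ru\to\Ru[1/p]$. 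What you lose relative to the paper is the clean "iff" statement of Proposition~\ref{prop:equiv} for arbitrary $\rhobar$ (whose forward direction uses Carayol's theorem that $\Ru$ is generated by traces), but that is not needed for the corollary since Theorem~\ref{theorem:one} supplies finiteness over $\OL$ unconditionally. Two harmless imprecisions: $\Ru/\mathfrak{p}$ in the characteristic-$0$ case is an order in, not necessarily the full ring of integers of, a finite extension of $E$; and in the characteristic-$p$ case it is $k$ itself (the residue field of $\Ru$ is $k$). Neither affects the argument.
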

 
 \begin{corr} \label{corr:FM} Suppose that $\rhobar: G_{F^{+}} \rightarrow \GL_2(k)$ satisfies the conditions of Theorem~\ref{theorem:one}. Assume futher that:
 \begin{enumerate}
 \item $\rhobar$ is totally odd.
 \item If $p = 5$ and $\rhobar$ has projective image
 $\PGL_2(\F_5)$, then $[F^{+}(\zeta_5):F^{+}] = 4$. 
 \end{enumerate}
 Then Boston's conjecture holds; the
representation $\rhounr: G_{F^{+}} \rightarrow \GL_2(\Ru)$ has finite image.
\end{corr}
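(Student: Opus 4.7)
The plan is to reduce Corollary~\ref{corr:FM} to a modularity statement for lifts of $\rhobar$, and then to invoke the theory of weight one Hilbert modular forms. Under the hypotheses of the corollary, those of Theorem~\ref{theorem:one} are satisfied, so by Corollary~\ref{corr:boston} Boston's conjecture is equivalent to the unramified Fontaine--Mazur conjecture for $\rhobar$. Concretely, it suffices to show that every continuous lift $\rho: G_{F^{+}} \to \GL_2(\Qbar_p)$ of $\rhobar$ which is unramified at all finite places has finite image.

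Fix such a lift $\rho$. Because $\rho$ is unramified at each place $v \mid p$, the local representation $\rho|_{G_{F^{+}_v}}$ is crystalline with Hodge--Tate weights $(0,0)$, and is automatically ordinary with unit eigenvalues of Frobenius. I would then invoke modularity lifting in two stages: first Taylor's potential modularity theorem to produce a solvable totally real extension $F'/F^{+}$ over which $\rhobar|_{G_{F'}}$ is modular, and then an ordinary modularity lifting theorem in the two-dimensional totally odd setting (in the style of Skinner--Wiles, Hida, or Kisin) to deduce that $\rho|_{G_{F'}}$ itself is modular. Since $\rho$ has parallel Hodge--Tate weights $(0,0)$, the associated Hilbert modular eigenform has parallel weight one, and by the theory of weight one Hilbert modular forms (going back to Deligne--Serre, in its totally real extension) the Galois representation attached to such an eigenform has finite image. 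Hence $\rho|_{G_{F'}}$ has finite image, and since $[F':F^{+}] < \infty$ the same holds for $\rho$.

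The main obstacle is supplying the ordinary modularity lifting input at parallel weight one: the Hodge--Tate weights $(0,0)$ are non-regular, so the Taylor--Wiles patching argument must be run in the nearly ordinary framework rather than in the more familiar regular-weight setting. The supplementary hypothesis on $[F^{+}(\zeta_5):F^{+}]$ in the exceptional $p=5$, $\PGL_2(\F_5)$ case is what is needed, beyond the adequacy already assumed in Theorem~\ref{theorem:one}, to keep $\rhobar|_{G_{F^{+}(\zeta_p)}}$ sufficiently large for the potential modularity and ordinary modularity lifting inputs to apply over every solvable totally real base change; it is precisely the configuration in which $\rhobar|_{G_{F^{+}(\zeta_p)}}$ would otherwise land inside $\PSL_2(\F_5) \cong A_5$, where the modularity machinery is known to fail.
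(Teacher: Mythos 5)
Your opening move is the same as the paper's: by Corollary~\ref{corr:boston} (which already uses the finiteness statement of Theorem~\ref{theorem:one}), Boston's conjecture for $\rhobar$ reduces to the unramified Fontaine--Mazur conjecture for $\rhobar$, i.e.\ to showing that every $\Qbar_p$-point of $\Ru$ has finite image. (One small imprecision: such lifts are unramified outside $S$ and at all $v \mid p$, but may be ramified at the finitely many places in $S$; your phrase ``unramified at all finite places'' treats only a special case, though this does not affect the strategy.) At this point the paper simply invokes Theorem~0.2 of~\cite{Pilloni} (see also~\cite{Kassaei}), which \emph{is} the unramified Fontaine--Mazur conjecture for totally odd two-dimensional $\rhobar$ over a totally real field under exactly the listed hypotheses, including the $p=5$, $\PGL_2(\F_5)$ caveat. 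You instead attempt to reprove that theorem, and this is where the gap lies.

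The chain ``potential modularity, then an ordinary modularity lifting theorem in the style of Skinner--Wiles, Hida, or Kisin, then Deligne--Serre'' does not close up: the lift $\rho$ has parallel Hodge--Tate weights $(0,0)$, which is a non-regular, non-cohomological weight, and the modularity lifting theorems you name all require regular (distinct) Hodge--Tate weights, since the Taylor--Wiles method as implemented there patches spaces of forms contributing to cohomology in regular weight. Establishing that an unramified-at-$p$ (equivalently, parallel weight one, ordinary) lift is associated to a \emph{classical} weight one Hilbert eigenform -- which is what you need before Deligne--Serre-type finiteness applies -- is precisely the hard content of the cited works: they proceed by the Buzzard--Taylor method of analytically continuing and glueing the two overconvergent $p$-adic eigenforms attached to the two Frobenius eigenvalue choices (with separate care when Frobenius at some $v\mid p$ is scalar), not by a standard nearly ordinary $R=\T$ theorem. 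You correctly flag this step as ``the main obstacle,'' but flagging it is not supplying it; as written the argument reproves the easy reduction and leaves the substance of~\cite{Pilloni} and~\cite{Kassaei} unestablished. Your gloss on the role of the $p=5$ hypothesis (avoiding projective image $\PSL_2(\F_5)\cong A_5$ after restriction to $G_{F^{+}(\zeta_5)}$, which obstructs the Taylor--Wiles prime argument) is correct in spirit and consistent with why that hypothesis appears in the cited theorems.
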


When $n = 2$, $p > 2$, $F = \Q$, and $\rhobar$ is totally odd and unramified at $p$,
 $\Ru$ can be identified  with the ring of Hecke operators acting on a (not necessarily torsion free)
coherent cohomology group (see~\cite{CG}).
 
\medskip


Let $\mathcal{G}_n$ be the group scheme over $\Z$ that is the semidirect product
	\[ (\GL_n \times \GL_1) \rtimes \{1,\jmath\} = \mathcal{G}_n^0 \rtimes \{1,\jmath\} \]
where $\jmath$ acts on $\GL_n\times \GL_1$ by $\jmath (g,\mu)\jmath^{-1} = (\mu{}^tg^{-1},\mu)$. Let $\nu: \mathcal{G}_n \rightarrow \GL_1$ be the character that sends $(g,\mu)$ to $\mu$ and $\jmath$ to $-1$. Let $F$ be a CM field with maximal totally real subfield $F^+$, and let 
	\[\rbar : G_{F^+} \rightarrow \mathcal{G}_n(k)\]
be a continuous homomorphism with $\rbar^{-1}(\mathcal{G}_n^0(k)) = G_{F}$. If $(A,\m)$ is a complete local  
$\OL$-algebra with residue field $k$, then a deformation
$r$ of $\rbar$ to $A$ unramified outside a finite set of primes $S$ consists an equivalence class of homomorphisms
\[ r: G_{F^+} \rightarrow \mathcal{G}_n(A)\]
such that the composite of $r$ with the projection
$\mathcal{G}_n(A) \rightarrow \mathcal{G}_n(A/\m) = \mathcal{G}_n(k)$ is $\rbar$, and such that the extension of fields
$F(\ker(r))$ over $F(\ker(\rbar))$ is unramified away from places above primes in $S$. We say two lifts are equivalent if they are conjugate by an element of $\GL_n(A)$ that reduces to the identity modulo $\m$. If $\rbar$ is Schur (see Definition 2.1.6 of \cite{CHT}), then this deformation problem is representable. By abuse of notation, we will again denote the  universal deformation ring of $\rbar$ by $\Runiv$ if $S$ contains all the primes above $p$, and by $\Ru$ if $S$
contains no primes above $p$. This shouldn't cause any confusion, as we shall be very explicit  as to which deformation problem we are refering.
As with the $\GL_n$-valued theory, for each $v|p$ in $F^+$, there is a universal framed deformation ring $R_v^\square$ which represents the lifts of $\rbar|D_v$, and a choice of lift in the equivalence class of the universal deformation of $\rbar$ makes $\Runiv$ an algebra over 
	\[ \Rloc = \displaystyle{\widehat{\bigotimes}_{v|p} \Rloc_v}.\]
We shall deduce Theorem~\ref{theorem:one} from the following result.

\begin{theorem}  \label{theorem:main} Let $F$ be a CM field with maximal totally real subfield $F^+$. Let $S$ denote a finite set of places of $F^+$ not containing any $v|p$, 
and let $\overline{r}: G_{F^+} \rightarrow \mathcal{G}_n(k)$ be a continuous homomorphism with $\overline{r}^{-1}(\mathcal{G}_n^0(k)) = G_{F}$ and such that $\nu\circ\rbar(c_v) = -1$ for each choice of complex conjugation $c_v$. Assume that $p\ge 2(n+1)$, that the image of
$\rbar|G_{F(\zeta_p)}$ is adequate, and that 
$\zeta_p \notin F$. Let $\Ru$ be the universal deformation ring of $\rbar$ unramified outside $S$,
and let $\Runiv$ be the universal deformation ring of $\rbar$ unramified outside $S$ and all
primes $v|p$. Then $\Ru$ is a finite $\OL$-algebra, and
 $\Runiv$ is a finite $\Rloc$-algebra.
\end{theorem}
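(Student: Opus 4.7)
The plan is to invoke the Taylor--Wiles--Kisin patching machinery in the $\mathcal{G}_n$-valued CM setting developed by Clozel--Harris--Taylor and refined (using adequacy in place of ``bigness'') by Thorne. The hypotheses of the theorem---$p \geq 2(n+1)$, adequate image of $\rbar|_{G_{F(\zeta_p)}}$, $\zeta_p \notin F$, and the sign condition $\nu\circ\rbar(c_v) = -1$---are exactly those needed to apply this machinery to spaces of algebraic modular forms on a definite unitary group for $F/F^+$.

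As a first step, I would reduce both claims to Nakayama-style statements. Since $\Rloc \to \Runiv$ is a local map of complete Noetherian local rings, $\Runiv$ is finite over $\Rloc$ if and only if $\Runiv/\m_{\Rloc}\Runiv$ is finite-dimensional over $k$, and analogously $\Ru$ is finite over $\OL$ if and only if $\Ru/\varpi\Ru$ is finite over $k$. (This form of Nakayama applies because $\Runiv$ is complete with respect to the $\m_{\Rloc}$-adic topology, which coincides with the $\m_{\Runiv}$-adic topology once the fibre is finite over $k$.) The task thus reduces to bounding these characteristic-$p$ fibres.

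Next I would carry out patching. For a Taylor--Wiles system $\{Q_N\}$ (whose existence is guaranteed by adequacy), construct framed deformation rings $R^\square_{Q_N}$ together with compatible patching data $M_{Q_N}$ built from spaces of algebraic modular forms on the unitary group, and pass to a limit to obtain $R_\infty \cong \Rloc[[x_1,\ldots,x_g]]$ and a finitely generated, maximal Cohen--Macaulay $R_\infty$-module $M_\infty$. Running this construction across all crystalline Hodge--Tate types $\lambda$, and invoking potential automorphy (BLGGT together with solvable base change) to produce automorphic lifts of $\rbar$ realising every $\lambda$, the combined supports of the patched modules $M_\infty^\lambda$ cover the characteristic-$0$ fibre of $\Runiv$ via their projection to $\Spec\Rloc$. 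Dimension counting then forces $\dim\Runiv \leq \dim\Rloc$, and combined with faithfulness of the $\Runiv$-action on the patched modules one extracts that $\Runiv/\m_{\Rloc}\Runiv$ is Artinian, giving the Nakayama input. The statement for $\Ru$ over $\OL$ follows by running the parallel argument with the framed unramified local deformation rings at $v|p$ (formally smooth over $\OL$ of the expected dimension) in place of $\Rloc_v$, together with the corresponding Nakayama reduction mod $\varpi$.

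The principal obstacle is to show that the union of supports of the $M_\infty^\lambda$ over all crystalline types $\lambda$ is dense enough in $\Spec \Runiv$ to detect the entire characteristic-$p$ fibre---since no single weight suffices. This requires combining potential automorphy in the CM setting with solvable base change, level-raising, and congruences between automorphic forms of different weights. A related technical point is tracking the patched module $M_\infty$ mod $\varpi$ itself, since the Nakayama reduction is ultimately a statement about $\Spec(\Runiv/\m_{\Rloc}\Runiv)$, and controlling the support of the patched module in characteristic $p$ is subtler than the characteristic-$0$ picture that potential automorphy directly provides.
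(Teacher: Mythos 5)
Your Nakayama reductions are fine, and you have correctly identified the general toolbox (Taylor--Wiles patching for definite unitary groups, adequacy, potential automorphy from BLGGT with solvable base change). But the core of your plan --- patch over all crystalline types $\lambda$ and show that the union of the supports of the $M_\infty^{\lambda}$ covers the characteristic-$p$ fibre --- is exactly the hard step you flag as the ``principal obstacle,'' and you offer no mechanism to overcome it. Worse, for the statement about $\Ru$ there is no automorphic input at all for your proposed patching with unramified-at-$p$ local conditions: automorphic forms on a definite unitary group give rise to crystalline representations with \emph{regular} Hodge--Tate weights, whereas an unramified representation has all Hodge--Tate weights equal to $0$. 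So no patched module $M_\infty^{\lambda}$ sees the unramified locus directly, and the ``parallel argument with the framed unramified local deformation rings in place of $\Rloc_v$'' has nothing on the automorphic side to patch. As written, the proposal does not close.

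The missing idea is to interpolate: pass through the \emph{ordinary} deformation ring over the Iwasawa algebra $\Lambda$. After base change one arranges that $\rbar$ is trivial at all $v\mid p$ and admits a minimal crystalline ordinary automorphic lift (this is where $p\ge 2(n+1)$ enters, via Proposition~3.3.1 of BLGGT); Thorne's finiteness theorem (Corollary~8.7 of \cite{thorne}) then gives that the ordinary deformation ring $\RS$, with local condition $R_{\Lambda_v}^{\triangle,ar}$ at $v\mid p$, is finite over $\Lambda$ --- no new patching is needed. The only genuinely new input is local: every unramified $\Qbar_p$-point fixes a full flag on whose graded pieces inertia acts trivially, so (using that $\Run$ is reduced with Zariski-dense $\Qbar_p$-points) $\Run$ is a quotient of $R_{\Lambda_v}^{\triangle,ar}/\aug$, where $\aug$ is the augmentation ideal. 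Hence $\Ru$ is a quotient of $\RS\otimes\Lambda/\aug$ and is finite over $\OL$ by Nakayama. Note also the order of deduction: the finiteness of $\Runiv$ over $\Rloc$ is not proved by a separate patching argument but \emph{follows from} the unramified statement, since $\Runiv\otimes_{\Rloc}k$ classifies deformations that restrict to $\rbar$ at each $v\mid p$, which are in particular unramified at $p$, so this fibre is a quotient of $\Ru\otimes_{\OL}k$. I would encourage you to rebuild your argument around this reduction to the ordinary (Hida-family) setting rather than attempting an all-weights patching construction.
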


It turns out that the proof of this theorem is almost an immediate consequence of the finiteness results of~\cite{thorne}
for ordinary deformation rings. The only required subtlety is to understand the relationship
between the local ordinary deformation ring $R_{\Lambda_K}^{\triangle,ar}$  constructed by~\cite{G}  and the unramified local deformation
ring $\Run$.


\medskip

We would like to thank Matthew Emerton, Toby Gee, and Vytautas Paskunas for useful conversations.  We would like to 
thank the organizers of the~2014 Bellairs workshop in Number Theory, where some of the ideas
in this note were first discussed. 
We would also like to thank the referees for helpful comments on a previous version of this note.

\section{Some Local Deformation Rings}\label{sec:localdefring}

Recall $k$ is a finite field of characteristic $p$, and $\OL = W(k)$. Let $K$ be a finite extension of $\Q_p$ and let $G_K = \mathrm{Gal}({\overline{K}/K)}$. Fix a continuous unramified representation
	\[\rhobar: G_K \rightarrow \GL_n(k)\]
and let $R^\square$ be its universal framed deformation ring. Let $\Run$ be the quotient of $R^\square$ corresponding to unramified lifts. 

\begin{lemma}\label{lemma:unram}
The ring $\Run$ is isomorphic to a power series ring over $\CO$ in $n^2$ variables. In particular, it is reduced and its $\Qbar_p$-points are Zariski dense in $\Spec(\Run)$.
\end{lemma}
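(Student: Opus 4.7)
The plan is to identify $\Run$ as the ring pro-representing the functor of lifts of the single matrix $\rhobar(\Frob)$, from which every claim follows by inspection.

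First I would recall that $\Run$ pro-represents the functor $F$ sending a complete local Noetherian $\CO$-algebra $A$ with residue field $k$ to the set of continuous unramified lifts $\rho: G_K \to \GL_n(A)$ of $\rhobar$. Since any unramified $\rho$ factors through $G_K/I_K \cong \widehat{\Z}$, evaluation at Frobenius gives a natural transformation
$$F(A) \longrightarrow G(A) := \bigl\{M \in \GL_n(A) : M \equiv \rhobar(\Frob) \pmod{\m_A}\bigr\}, \qquad \rho \longmapsto \rho(\Frob),$$
which is injective because $\widehat{\Z}$ is topologically generated by $\Frob$. To establish surjectivity, I would use that each Artinian quotient $A/\m_A^N$ is \emph{finite} (since $k$ is finite), so $\GL_n(A/\m_A^N)$ is a finite group and the image of any matrix has finite order. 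Thus $\Frob \mapsto M \bmod \m_A^N$ factors through $\Z/d_N\Z$ for some $d_N$ and extends uniquely to a continuous homomorphism $\widehat{\Z} \to \GL_n(A/\m_A^N)$; passing to the inverse limit yields an unramified $\rho$ with $\rho(\Frob) = M$.

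Having identified $F \cong G$, the functor $G$ is evidently pro-represented by the formal power series ring $\CO[[X_{ij}]]_{1 \leq i,j \leq n}$ with universal object $\tilde{M} + (X_{ij})$, where $\tilde{M} \in M_n(\CO)$ is any set-theoretic lift of $\rhobar(\Frob)$; this matrix lies in $\GL_n$ because its reduction does. Consequently $\Run \cong \CO[[X_{ij}]]$ is a power series ring in $n^2$ variables over $\CO$, which is a regular local domain and hence reduced.

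For Zariski density of $\Qbar_p$-points, the $\Qbar_p$-points of $\Spec \Run$ are parameterized by $n^2$-tuples of elements in the maximal ideal of $\overline{\Z}_p$ via $X_{ij} \mapsto a_{ij}$. Any nonzero $f \in \CO[[X_{ij}]]$ defines a nonzero rigid-analytic function on the open unit polydisc and therefore cannot vanish at every such point (equivalently by Weierstrass preparation in several variables applied inductively); hence the intersection of the kernels of all $\Qbar_p$-valued points is zero, which gives density. The only place requiring any real thought is the surjectivity of $F \to G$ in the first step, where finiteness of $k$ is what allows a bare matrix to be promoted to a continuous Galois representation; everything else is formal.
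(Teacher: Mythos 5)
Your proof is correct and takes essentially the same approach as the paper, which simply observes that $\Frob \mapsto g(I+(x_{ij}))$ for a fixed lift $g \in \GL_n(\CO)$ of $\rhobar(\Frob)$ gives the universal unramified framed deformation over $\CO[[\{x_{ij}\}]]$. Your parameterization $\tilde{M}+(X_{ij})$ is the same up to a change of coordinates, and you have merely filled in the details (extension of a matrix to a continuous representation of $\widehat{\Z}$, density of $\Qbar_p$-points) that the paper leaves as "easy to see."
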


\begin{proof} Fixing a choice of lift $g\in \GL_n(\CO)$ of $\rhobar(\mathrm{Frob})$, it is easy to see that the lift to $\CO[[\{x_{ij}\}_{1\le i,j\le n}]]$ given by $\Frob \mapsto g(I+(x_{ij}))$ is the universal framed deformation.
\end{proof}

Let $I_K^{\mathrm{ab}}$ be the inertia subgroup of the abelianization of $G_K$, and let $I_K^{\mathrm{ab}}(p)$  be its maximal pro-$p$ quotient. Let $\Lambda_K = \OL[[(I_K^{\mathrm{ab}}(p))^n]]$ and let $\psi = (\psi_1,\ldots,\psi_n)$ be the universal $n$-tuple of characters $\psi_i : I_K \rightarrow \Lambda_K^\times$. Set $R_{\Lambda_K}^\square = R^\square \widehat{\otimes}_{\OL}\Lambda_K$. 

We briefly recall the construction of the universal ordinary deformation ring $R_{\Lambda_K}^\triangle$ by Geraghty (see \S~3 of \cite{G}). Let $\mathcal{F}$ be the flag variety over $\OL$ whose $S$ points, for any $\OL$-scheme $S$, is the set of increasing filtrations $0 = F_0\subset F_1\subset \cdots \subset F_n = \OL_S^n$ of $\OL_S^n$ by locally free submodules with $\rank(F_i) = i$ for each $i=1,\ldots,n$. Lemma 3.1.2 of \cite{G} shows that the subfunctor of
	\[ R_{\Lambda_K}^\square\otimes_{\OL} \mathcal{F}\]
corresponding to pairs $(\rho, \{F_i\})$ such that 
	\begin{itemize}
		\item $\{F_i\}$ is stabilized by $\rho$ and
		\item the action of $I_K$ on $F_i/F_{i-1}$ is given by the pushforward of $\psi_i$,
	\end{itemize}
is represented by a closed subscheme $\mathcal{G}$. He then defines $R_{\Lambda_K}^\triangle$ as the image of
	\[ R_{\Lambda_K}^\square \rightarrow \OL_{\mathcal{G}}(\mathcal{G}[1/p]).\]
	
Since scheme theoretic image commutes with flat base change, $R_{\Lambda_K}^\triangle[1/p]$ is the scheme theoretic image of 
	\[ \mathcal{G}[1/p] \rightarrow \Spec (R_{\Lambda_K}^\square[1/p]).
	\]
Since this map is proper, $\mathcal{G}[1/p]$ surjects onto $\Spec (R_{\Lambda_K}^\triangle[1/p])$. Because $\mathcal{G}$ is finite type over $R_{\Lambda_K}^\triangle$, we deduce that any $\Qbar_p$ point of $\Spec (R_{\Lambda_K}^\triangle[1/p])$ lifts to a $\Qbar_p$-point of $\mathcal{G}[1/p]$. This proves the following.

\begin{lemma}\label{lemma:trianglepoints} Let $x\in \Spec (R_{\Lambda_K}^\square)(\Qbar_p)$, and let $(\rho_x,\psi_x)$ denote the pushforward via $x$ of the universal framed deformation and $n$-tuple of characters of $I_K$. Then $x$ factors through $R_{\Lambda_K}^\triangle[1/p]$ if and only if there is a full flag $0=F_0\subset F_1\subset \cdots \subset F_n=\Qbar_p^n$ stabilized by $\rho_x$ such that the action of $I_K$ on $F_i/F_{i-1}$ is given by $\psi_{i,x}$ for each $i=1,\ldots,n$.
\end{lemma}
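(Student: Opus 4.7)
The plan is that both directions follow almost formally from the moduli interpretation of $\mathcal{G}$ together with the surjectivity statement in the paragraph preceding the lemma; the only task is to unwind what each side means.

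For the ($\Leftarrow$) implication, suppose we are given a full flag $0=F_0\subset\cdots\subset F_n=\Qbar_p^n$ stabilized by $\rho_x$ with the prescribed inertia action on each graded piece $F_i/F_{i-1}$. By the representability of $\mathcal{G}$ recalled from Lemma~3.1.2 of \cite{G}, the pair $(\rho_x, \{F_i\})$ determines a $\Qbar_p$-point $\widetilde{x}\in \mathcal{G}(\Qbar_p)$ lying over $x$. Since $\Qbar_p$ has characteristic zero, $\widetilde x$ automatically factors through $\mathcal{G}[1/p]$. Its image in $\Spec R_{\Lambda_K}^\square[1/p]$ is $x$, and since $\Spec R_{\Lambda_K}^\triangle[1/p]$ is by construction the scheme-theoretic image of $\mathcal{G}[1/p]$ in $\Spec R_{\Lambda_K}^\square[1/p]$, we conclude that $x$ factors through $R_{\Lambda_K}^\triangle[1/p]$.

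For the ($\Rightarrow$) direction, this is essentially the content of the last sentence of the paragraph preceding the lemma: the proper surjection $\mathcal{G}[1/p] \to \Spec R_{\Lambda_K}^\triangle[1/p]$ is of finite type between Jacobson schemes over $\Q_p$, so it is surjective on $\Qbar_p$-points via the Nullstellensatz. Choosing any lift $\widetilde{x}\in \mathcal{G}[1/p](\Qbar_p)$ of $x$, the moduli interpretation of $\mathcal{G}$ supplies precisely a full flag of $\Qbar_p^n$ stabilized by $\rho_x$ with the stipulated inertia action on graded pieces.

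The only mildly subtle step is the lifting of $\Qbar_p$-points of $\Spec R_{\Lambda_K}^\triangle[1/p]$ to $\mathcal{G}[1/p]$, which relies on the finite-type (in fact proper) hypothesis and on $\Qbar_p$ being algebraically closed; everything else is a direct unwinding of the definition of $\mathcal{G}$ as the representing object for pairs $(\rho, \{F_i\})$.
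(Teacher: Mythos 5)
Your proof is correct and is essentially the same argument the paper intends: the backward direction uses the moduli description of $\mathcal{G}$ plus the fact that $R_{\Lambda_K}^\triangle[1/p]$ is the scheme-theoretic image of $\mathcal{G}[1/p]$, and the forward direction uses exactly the properness/finite-type lifting of $\Qbar_p$-points established in the paragraph before the lemma (the paper simply says ``this proves the following'' after that paragraph). No gaps.
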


If $\rhobar$ is the trivial representation, then Geraghty defines a further quotient $R_{\Lambda_K}^{\triangle,ar}$ of $R_{\Lambda_K}^\triangle$ as follows. Let $Q_1,\ldots,Q_m$ be the minimal primes of $\Lambda_K$. For each $j = 1,\ldots,m$, let $\mathcal{G}_j = \mathcal{G}\otimes_{\Lambda_K}\Lambda_K/Q_j$. Let $W_j\subset\Spec(\Lambda_K/Q_j)$ be the closed subscheme defined by $\psi_r = \epsilon_p\psi_s$ for some $1\le r < s \le n$, and let $U_j$ be the complement of $W_j$. Geraghty shows (see \S~3.4 of \cite{G}) that there is a unique irreducible component $\mathcal{G}_j^{ar}$ of $\mathcal{G}_j$ lying above $U_j$. We then set $\mathcal{G}^{ar} = \cup_{1\le j\le m}\mathcal{G}_j^{ar}$ and define  $R_{\Lambda_K}^{\triangle,ar}$ to be the image of
	\[ R_{\Lambda_K}^{\triangle}\rightarrow \OL_{\mathcal{G}^{ar}}(\mathcal{G}^{ar}[1/p]).\]
	
The construction of $R_{\Lambda_K}^{\triangle,ar}$ together with Lemma \ref{lemma:trianglepoints} yields the following.

\begin{lemma}\label{lemma:arpoints}
Assume that $\rhobar$ is trivial. Let $x\in \Spec (R_{\Lambda_K}^\square)(\Qbar_p)$, and let $(\rho_x,\psi_x)$ denote the pushforward via $x$ of the universal framed deformation and $n$-tuple of characters of $I_K$. Assume that there is a full flag $0=F_0\subset F_1\subset \cdots \subset F_n= \Qbar_p^n$ stabilized by $\rho_x$ such that the action of $I_K$ on $F_i/F_{i-1}$ is given by $\psi_{i,x}$ for each $i=1,\ldots,n$. Assume further that $\psi_{i,x} \ne \epsilon_p\psi_{j,x}$ for any $i<j$. Then $x$ factors through $R_{\Lambda_K}^{\triangle,ar}$.
\end{lemma}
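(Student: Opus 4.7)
My plan is to use the flag $\{F_i\}$ supplied by the hypothesis to produce an explicit $\Qbar_p$-point $\wt{x}$ of $\mathcal{G}$ lying over $x$, to verify that $\wt x$ lies on the arithmetic component $\mathcal{G}^{ar}$, and finally to deduce that $x$ kills the kernel of $R_{\Lambda_K}^\triangle \to \OL_{\mathcal{G}^{ar}}(\mathcal{G}^{ar}[1/p])$.

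First, the pair $(\rho_x, \{F_i\})$ from the hypothesis tautologically defines a $\Qbar_p$-point $\wt{x}\in \mathcal{G}(\Qbar_p) = \mathcal{G}[1/p](\Qbar_p)$ whose image in $\Spec(R_{\Lambda_K}^\square[1/p])$ is $x$; in particular Lemma~\ref{lemma:trianglepoints} already ensures that $x$ factors through $R_{\Lambda_K}^\triangle[1/p]$. The composition $\Lambda_K \to R_{\Lambda_K}^\square \to \Qbar_p$ (where the second map is $x$) produces a $\Qbar_p$-point of $\Spec(\Lambda_K)$, and by Noetherianity this point must lie in $\Spec(\Lambda_K/Q_j)$ for at least one minimal prime $Q_j$. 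Consequently $\wt x$ lies in $\mathcal{G}_j[1/p]$.

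Second, the hypothesis $\psi_{i,x}\ne \epsilon_p\psi_{j,x}$ for all $i<j$ is exactly the statement that the image of $\wt x$ in $\Spec(\Lambda_K/Q_j)$ avoids the closed subscheme $W_j$ and hence lies in $U_j$. Any irreducible component of $\mathcal{G}_j$ containing $\wt x$ therefore has image in $\Spec(\Lambda_K/Q_j)$ meeting $U_j$; by the uniqueness asserted in \S~3.4 of \cite{G}, this component must equal $\mathcal{G}_j^{ar}$. Hence $\wt x \in \mathcal{G}_j^{ar}[1/p] \subseteq \mathcal{G}^{ar}[1/p]$. I expect this uniqueness-of-component step to be the main point requiring care, since it is the place where the hypothesis on the $\psi_{i,x}$ is genuinely used; the remainder is a scheme-theoretic translation.

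To finish, let $f$ lie in the kernel of $R_{\Lambda_K}^\triangle \to \OL_{\mathcal{G}^{ar}}(\mathcal{G}^{ar}[1/p])$, so that $f$ restricts to zero on $\mathcal{G}^{ar}[1/p]$ and in particular $f(\wt x) = 0$. But $x(f)$ equals $f$ evaluated at $\wt x$, since evaluation at $\wt x$ on $\OL_{\mathcal{G}}(\mathcal{G}[1/p])$ precomposed with the structural map $R_{\Lambda_K}^\square \to \OL_{\mathcal{G}}(\mathcal{G}[1/p])$ recovers $x$ by construction of $\wt x$. Therefore $x(f)=0$, and $x$ factors through the image $R_{\Lambda_K}^{\triangle,ar}$ as required.
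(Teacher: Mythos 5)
Your argument is correct and is precisely the unpacking of what the paper leaves implicit (the paper's ``proof'' is the single sentence that the lemma follows from the construction of $R_{\Lambda_K}^{\triangle,ar}$ together with Lemma~\ref{lemma:trianglepoints}): you lift $x$ to a point $\wt{x}$ of $\mathcal{G}$ via the given flag, use the hypothesis on the $\psi_{i,x}$ to place its image in $U_j$ and hence $\wt{x}$ on the unique component $\mathcal{G}_j^{ar}$ meeting the preimage of $U_j$, and conclude that $x$ kills the kernel defining $R_{\Lambda_K}^{\triangle,ar}$. The one point to be aware of is that your uniqueness step reads ``unique irreducible component lying above $U_j$'' as ``unique component whose image meets $U_j$,'' which is indeed the content of Geraghty's \S~3.4 and is what the paper intends.
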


\begin{remark}  \emph{If $[K:\Q_p] > n(n-1)/2 + 1$  and $\rhobar$ is trivial (which, for our applications, we could assume),
then Thorne proves  that 
$R_{\Lambda_K}^{\triangle,ar} = R_{\Lambda_K}^{\triangle}$ 
(see Corollary~3.12 of~\cite{thornered}).}
\end{remark}

There is a natural map $\Lambda_K \rightarrow \Run$ given by modding out by the augmentation ideal
$\aug$ of $\Lambda_K$. We thus have a natural surjection
	\[ R_{\Lambda_K}^\square \rightarrow \Run.\]

\begin{prop}\label{prop:unramord} The surjection $R_{\Lambda_K}^\square \rightarrow \Run$ factors through $R_{\Lambda_K}^\triangle$. If $\rhobar$ is trivial, then it further factors through $R_{\Lambda_K}^{\triangle,ar}$.
\end{prop}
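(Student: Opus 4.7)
The plan is to verify both factorizations by checking them on the dense set of $\Qbar_p$-valued points of $\Spec(\Run)$, using Lemma \ref{lemma:unram}: namely that $\Run$ is $\OL$-flat (hence $p$-torsion free) and that its $\Qbar_p$-points are Zariski dense. Both $R_{\Lambda_K}^\triangle$ and, when $\rhobar$ is trivial, $R_{\Lambda_K}^{\triangle,ar}$ are themselves $p$-torsion free, since by construction each embeds into a $\Q_p$-algebra (respectively $\OL_{\mathcal{G}}(\mathcal{G}[1/p])$ and $\OL_{\mathcal{G}^{ar}}(\mathcal{G}^{ar}[1/p])$). Consequently the kernels of $R_{\Lambda_K}^\square\onto R_{\Lambda_K}^\triangle$ and of $R_{\Lambda_K}^\triangle \onto R_{\Lambda_K}^{\triangle,ar}$ are stable under inverting $p$, and it suffices to show that for every $\Qbar_p$-point $x\colon \Run\to \Qbar_p$, the composition $R_{\Lambda_K}^\square\to \Run\to \Qbar_p$ factors through $R_{\Lambda_K}^\triangle[1/p]$, and in the trivial case further through $R_{\Lambda_K}^{\triangle,ar}[1/p]$.

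The corresponding $\Qbar_p$-point of $R_{\Lambda_K}^\square$ is a pair $(\rho_x,\psi_{\cdot,x})$ in which $\rho_x\colon G_K\to \GL_n(\Qbar_p)$ is unramified (since it factors through $\Run$) and in which each $\psi_{i,x}$ is the trivial character (since $\Lambda_K\to \Run$ is the augmentation). To apply Lemma \ref{lemma:trianglepoints} one needs a full $\rho_x$-stable flag on which $I_K$ acts via $\psi_{i,x}=1$ on each graded piece. Since $\rho_x|_{I_K}$ is trivial, the condition on the graded pieces is automatic, and the flag itself is obtained by triangularizing the single matrix $\rho_x(\Frob)\in \GL_n(\Qbar_p)$, which is possible because $\Qbar_p$ is algebraically closed; any such flag is $\rho_x$-stable since the image of $\rho_x$ is topologically generated by $\rho_x(\Frob)$. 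This yields the factorization through $R_{\Lambda_K}^\triangle$.

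For the second assertion, with $\rhobar$ trivial, Lemma \ref{lemma:arpoints} requires the additional condition $\psi_{i,x}\ne \epsilon_p\psi_{j,x}$ for $i<j$; since $\psi_{i,x}=\psi_{j,x}=1$ this reduces to $\epsilon_p|_{I_K}\ne 1$, which holds because $K$ is a finite extension of $\Q_p$ and therefore contains only finitely many $p$-power roots of unity, forcing $I_K$ to act non-trivially on $\mu_{p^\infty}$. No essential obstacle is expected beyond the initial reduction to $\Qbar_p$-points; its validity rests entirely on the flatness and density information already recorded in Lemma \ref{lemma:unram}.
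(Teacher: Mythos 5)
Your proof is correct and follows essentially the same route as the paper: check the factorization on the Zariski-dense set of $\Qbar_p$-points of $\Spec(\Run)$, produce the required flag by triangularizing $\rho_x(\Frob)$ (with the inertia condition automatic since $\rho_x$ is unramified and the $\psi_{i,x}$ are trivial), and invoke Lemmas~\ref{lemma:trianglepoints} and~\ref{lemma:arpoints}. One small remark: the property of $\Run$ that closes the argument is its reducedness (so that the intersection of the kernels of its dense set of $\Qbar_p$-points is zero), which Lemma~\ref{lemma:unram} supplies, rather than $\OL$-flatness.
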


\begin{proof}  
The image of an unramified representation is the topological closure of the
image of Frobenius. Since any element of $\GL_n(\Qbar_p)$ is conjugate to an upper triangular matrix,  that the image of any unramified representation into~$\GL_n(\Qbar_p)$ fixes a full flag for which
the action of inertia on the corresponding quotients is trivial.
It follows that the projection from $R_{\Lambda_K}^\square$ to any~$\Qbar_p$-point  of $\Run$ factors through 
$R_{\Lambda_K}^\triangle$ by Lemma~\ref{lemma:trianglepoints} and, if $\rhobar$ is trivial,
through $R_{\Lambda_K}^{\triangle,ar}$, by Lemma~\ref{lemma:arpoints}.
The result then follows from the fact
that~$\Run$ is reduced and its $\Qbar_p$-points are Zariski dense,
by  Lemma \ref{lemma:unram}.
 \end{proof}

\section{Proof of  Theorem~\ref{theorem:main}}

We first prove the statement concerning $\Ru$ over $\OL$.
Take a representation $\rbar$ as in Theorem~\ref{theorem:main}.
For eavh $v|p$ in $F^+$, let $F_v^+$ be the completion of $F^+$ at $v$ and let $\Lambda_v = \Lambda_{F_v^+}$ with $\Lambda_{F_v^+}$ as in \S~\ref{sec:localdefring}. Let $\Lambda = \widehat{\otimes}_{v|p,\OL}\Lambda_v$.

We note that using Lemma~1.2.2 of~\cite{BLGGT}, we are free to make any base change disjoint from the fixed field of $\ker(\rbar)$. After a base change, we may assume that $\rbar$ is everywhere unramified, and that $\rbar |D_{v}$ is trivial for all $v|p$ as well as any finite set of auxiliary primes.
In particular, after a suitable base change, we may restrict ourselves to considering
deformation rings which are unipotent at some finite set of auxiliary primes~$v \in S$
(which corresponds to the local deformation condition $R^{1}_{\widetilde{v}}$ of~\cite{thorne},~\S~8).
By Proposition~3.3.1 of~\cite{BLGGT},
 we may assume, after a further base change, that $\rbar$ lifts to a minimal crystalline
ordinary modular representation (this is where we use the assumption that $p\ge 2(n+1)$). 
 From Corollary~8.7 of~\cite{thorne}, we deduce that the corresponding ordinary  deformation ring $\RS$ is finite over $\Lambda$. If we can  show that $\Ru$ is a quotient of $\RS \otimes \Lambda/\aug$, where
 $\aug$ is the augmentation ideal of~$\Lambda$, then 
the result follows immediately by Nakayama, since $\Lambda/\aug = \OL$. 
By definition, the local condition at $v|p$ for $\RS$ is determined by  the ordinary deformation ring
$R_{\Lambda_v}^{\triangle,ar}$. By
Proposition~\ref{prop:unramord}, the ring $\Run$ is a quotient of $R_{\Lambda_v}^{\triangle,ar}/\aug$.
Hence $\Ru$ is a quotient of $\RS \otimes \Lambda/\aug$ and we are done.

The finiteness of $\Runiv$ over $\Rloc$ then follows from the finiteness of $\Run$ over $\OL$ and Nakayama. Indeed, let $\Rsplit = \Runiv \otimes_{\Rloc}k$ and let $r^{\mathrm{split}}$ be the specialization of the universal deformation to $\Rsplit$. Then $r^{\mathrm{split}}|D_v \cong \rbar|D_v$ for any $v|p$ in $F^+$, so the quotient $\Runiv \rightarrow \Rsplit$ factors through $\Run\otimes_{\OL}k$.

\section{Some Corollaries}

\subsection{Proof of Theorem~\ref{theorem:one}}

Let $\rhobar$ satisfy the statement of Theorem~\ref{theorem:one}.
Consider $\Ad^0(\rhobar)$ restricted to a suitable quadratic CM extension $F/F^{+}$.
Since $p \nmid n$, the representation $\Ad^0(\rhobar)$ is a direct summand of 
$\rhobar^c \otimes \rhobar^* = \rhobar \otimes \rhobar^{*}$
and is conjugate self-dual. The  assumption of irreducibility together
with the inequality $p > 2n^2 -1$ imply
 that~$\Ad^0(\rhobar)$
is adequate by Theorem~A.9 of~\cite{thorne}.
If $n$ is even, then $\Ad^0(\rhobar)$ has odd dimension and so is automatically totally odd. 
If $n$ is odd, then $\Ad^0(\rhobar)$ is orthogonal (the conjugate self-duality is realized by the trace pairing, which is symmetric) and exactly self-dual (up to trivial twist) and so has trivial multiplier, which
means that is also
totally odd. Both uses of totally odd refer to the properties of the multiplier character rather
than the determinant of complex conjugation, and are the exact  sign conditions required
for automorphy lifting theorems for unitary groups (that is, totally odd means $U$-odd rather than
$\GL$-odd in the notation of~\cite{sign}, see also~\S~2.1 of~\cite{BLGGT}).
Hence $\Ad^0(\rhobar)|G_F$ extends to a homomorphism (see Lemma 2.1.1 of \cite{CHT})
	\[ \rbar : G_{F^+} \rightarrow \mathcal{G}_{n^2-1}(k), \]
which we fix, satisfying the conditions of Theorem~\ref{theorem:main}. On the other hand, any deformation of $\rhobar$
gives rise to a deformation of $\rbar$ in the natural way. 
By Yoneda's Lemma, there is a corresponding morphism $\Ru({\rbar}) \rightarrow \Ru({\rhobar})$. It suffices to
prove this is finite. By Nakayama's Lemma, this reduces to showing that the only deformations $\rho$ of $\rhobar$ to $k$-algebras such that $\Ad^0(\rho)|G_F \cong \Ad^0(\rhobar)|G_F$ are finite. The kernel of such a deformation must be contained in the maximal abelian pro-$p$ extension of $F(\ker(\rhobar))$ unramified outside $S$, which is finite by class field theory. 
As in the final paragraph of the proof of Theorem~\ref{theorem:main}, the finiteness
of $\Ru$ implies the finiteness of $\Rsplit$ and hence that $\Runiv$ is a finite $\Rloc$-algebra.

If $n = 2$ and $\rhobar$ is totally odd, then we may work directly with $\rhobar$. We first use Corollary~1.7 of~\cite{TaylorRmk} to conclude that $\rhobar$ is potentially modular and Theorem~A of \cite{BLGGord2} to assume it is potentially ordinarily modular. Then, restricting $\rhobar$ to a suitable CM field $F$, the proof is exactly as in the proof of Theorem~\ref{theorem:main} (without the appeal to Proposition~3.3.1 of~\cite{BLGGT}).


\subsection{Proof of Corollary~\ref{corr:boston}}
This follows immediately from Theorem~\ref{theorem:one} and the following proposition.

\begin{prop}\label{prop:equiv}
Let $F$ be a number field and let $\rhobar: G_F \rightarrow \GL_n(k)$ be continuous and absolutely irreducible. Then	
	\[ \rhounr: G_{F} \rightarrow \GL_n(\Ru) \]
has finite image if and only if the following two properties hold:
	\begin{enumerate}
	\item $\Ru$ is finite over $\OL$;
	\item for any minimal prime $\mathfrak{p}$ of $\Ru[1/p]$, the induced representation $G_F \rightarrow \GL_n(\Ru[1/p]/\mathfrak{p})$ has finite image.
	\end{enumerate}
\end{prop}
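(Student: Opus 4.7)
The forward direction is the more straightforward of the two. Condition (2) is immediate, since the image of $\rhounr$ composed with any quotient map $\GL_n(\Ru)\to \GL_n(\Ru[1/p]/\mathfrak{p})$ is a quotient of the finite image $\rhounr(G_F)$, hence finite. For condition (1), each $\rhounr(g)$ has finite order $m(g)$ dividing $|\rhounr(G_F)|$, so by Cayley--Hamilton its characteristic polynomial divides $(X^{m(g)}-1)^n$ in $\Ru[X]$, and in particular $\tr\rhounr(g)$ is a sum of roots of unity, integral over $\OL$. Absolute irreducibility of $\rhobar$ implies $\Ru$ is generated as an $\OL$-algebra by the traces $\tr\rhounr(g)$, so finite image of $\rhounr$ yields a finite set of integral generators of $\Ru$ over $\OL$, hence $\Ru$ is finite over $\OL$.

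For the backward direction, assume (1) and (2). The case when $\Ru$ is Artinian is trivial, since then $\Ru$ is a finite set. Otherwise, $\Ru$ has Krull dimension one, no minimal prime contains $p$, and the minimal primes of $\Ru$ correspond bijectively to those of $\Ru[1/p]$; moreover $\Ru^{\mathrm{red}} = \Ru/\mathrm{nil}(\Ru)$ is $\OL$-torsion-free and embeds into $\prod_{\mathfrak{p}}L_{\mathfrak{p}}$, with $\mathfrak{p}$ running over the (finitely many) minimal primes of $\Ru[1/p]$. Condition (2) then shows that the composite $G_F \to \GL_n(\Ru^{\mathrm{red}}) \hookrightarrow \prod\GL_n(L_{\mathfrak{p}})$ has finite image, whence $\rhounr\bmod\mathrm{nil}(\Ru)$ has finite image. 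Let $H \trianglelefteq G_F$ denote its kernel, an open subgroup of finite index; for $g \in H$ one has $\rhounr(g) = I + X_g$ with $X_g \in M_n(\mathrm{nil}(\Ru))$ a nilpotent matrix, so $\rhounr(g)$ is unipotent and $\tr\rhounr(g) = n$.

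The remaining step is to show $\rhounr(H)$ is finite, which then gives that $\rhounr(G_F)$ is finite (by finite index of $H$). My plan is to argue that $\rhounr(H)$, being a closed subgroup of $\GL_n(\Ru)$ with $\Ru$ finite over $\OL$, is a compact $p$-adic analytic group, and then to conclude via the structure theory of such groups: if all elements of $\rhounr(H)$ have finite order then the uniform pro-$p$ subgroup guaranteed by Lazard must be trivial, forcing $\rhounr(H)$ to be finite. To show every element has finite order, one expands
\[
(I+X_g)^{p^k} \;=\; I \;+\; \sum_{j=1}^{K-1}\binom{p^k}{j}X_g^{\,j},
\]
where $K$ is such that $\mathrm{nil}(\Ru)^{K}=0$, and uses the standard $p$-adic valuations $v_p\binom{p^k}{j} = k - v_p(j)$ together with the fact that the entries of $X_g$ are $p$-power torsion to conclude $(I+X_g)^{p^k}=I$ for sufficiently large $k$.

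The main obstacle is to establish that $\mathrm{nil}(\Ru)$ is $\OL$-torsion --- equivalently, that $\Ru[1/p]$ is reduced. This is what ensures the entries of each $X_g$ lie in $\Ru_{\mathrm{tors}}$ (so the binomial-coefficient calculation above really does terminate); without it, $\mathrm{nil}(\Ru)$ could have an $\OL$-free direct summand and the elements $\rhounr(g)$ for $g\in H$ could have infinite order. I expect reducedness of $\Ru[1/p]$ to be the delicate point, to be deduced from (1), (2), and the pseudo-representation structure of $\Ru$ (absolute irreducibility makes $\Ru$ the universal trace ring, and the traces mod $\mathrm{nil}$ are known to take only finitely many values all equal to the scalar $n$ on $H$). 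Once this reducedness is in hand, $M_n(\mathrm{nil}(\Ru))$ is a finite set, $I+M_n(\mathrm{nil}(\Ru))$ is a finite subgroup of $\GL_n(\Ru)$ containing $\rhounr(H)$, and the proof concludes.
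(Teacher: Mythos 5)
Your forward direction matches the paper's (triviality of (2) plus Carayol's trace-generation result for (1)), and your overall reduction of the backward direction is sound: modulo minor inaccuracies (it is not true that no minimal prime of $\Ru$ contains $p$ --- there can be components killed by inverting $p$ --- but these are finite and harmless), everything does come down to showing that $\Ru[1/p]$ is reduced, equivalently that $\mathrm{nil}(\Ru)$ is $\OL$-torsion. You identify this correctly as the crux. But you do not prove it, and this is precisely where the arithmetic content of the proposition lies; it is not a formal consequence of (1), (2), and the pseudo-representation structure of $\Ru$, and no amount of trace bookkeeping will deliver it. What is needed is the following: if some Artinian component $A$ of $\Ru[1/p]$ is not a field, it surjects onto $H[\eps]/\eps^2$ ($H$ the residue field), producing a nontrivial deformation $\rho: G_F \to \GL_n(H[\eps]/\eps^2)$ of a finite-image representation $V$, i.e.\ a nonsplit extension $0 \to V \to W \to V \to 0$. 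Restricting to a finite extension $L/F$ on which $G_L$ acts trivially on $V$, the extension class becomes a nonzero continuous homomorphism $G_L \to (M_n(H),+)$ unramified outside $S$ and at all places above $p$; its image is a copy of $\Z_p$, so it cuts out a $\Z_p$-extension of $L$ unramified at $p$, which is impossible by class field theory. This contradiction forces each $A$ to equal $H$, giving the reducedness you need. Without this input the claim is false for general profinite groups, so the gap is essential. (Once reducedness is granted, your concluding paragraph suffices; the detour through Lazard's theory and the binomial valuation estimates is unnecessary, since $1+M_n(\ker(\Ru\to\Ru[1/p]))$ is already a finite group.)
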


\begin{proof}
If $\rhounr$ has finite image, then (2) is clearly satisfied, and (1) follows from Th\'{e}or\`{e}me~2 of \cite{carayol}, which shows that $\Ru$ is generated over $\OL$ by traces. 

Now assume (1) and (2),
and let $E$ be the fraction field of $\OL$. Since
$\Ru$ is a finite $\OL$-algebra,  the map $\Ru \rightarrow \Ru[1/p]$ has finite
kernel. Hence it suffices to prove that the map
$$\rho: G_F \rightarrow \GL_n(\Ru[1/p])$$
has finite image, assuming (2).
Since $\Ru$ is finite over $\OL$, the ring $\Ru[1/p]$ is
a semi-local ring which is a direct sum of Artinian $E$-algebras  $A$ with residue field $H$ for some finite $[H:E] < \infty$.
In particular, the representation $\rho$ breaks up into a finite direct sum of representations to such groups $\GL_n(A)$. 
If $A = H$, then assumption (2)
implies that the image of such a representation is finite.
If $A \ne H$, then $A$ admits a surjective map to $H[\eps]/\eps^2$. In particular, there exists an unramified deformation:
$$\rho: G_{F} \rightarrow \GL_n(H[\eps]/\eps^2).$$
By assumption (2)
again, the corresponding residual representation with image in $\GL_n(H)$ is finite, and is given
by some representation $V$ on which $G_F$ acts through a finite group. Moreover, $\rho$ is then given by some nontrivial extension:
$$0 \rightarrow V \rightarrow W \rightarrow V \rightarrow 0.$$
Consider the restriction of this representation to a finite extension $L/F$ such that $G_{L}$ acts trivially on $V$. Then the action of
$G_L$ on $W$ factors through an unramified $\Z_p$-extension, which must be trivial by class field theory.
It follows that the action of $G_L$ on $W$ is trivial, and hence that the extension $W$ is trivial, a contradiction.
\end{proof}

\subsection{Proof of Corollary~\ref{corr:FM}}

 By Theorem~0.2 of~\cite{Pilloni} (see also~\cite{Kassaei}), one knows  the unramified Fontaine--Mazur conjecture for
 $\rhobar$ under the given hypothesis, hence the result follows from Corollary~\ref{corr:boston}.

\bibliographystyle{amsalpha}
\bibliography{snowden}

 \end{document}